\patchcmd{\section}{\scshape}{\bfseries}{}{}
\newenvironment{nouppercase}{%
  \renewcommand{\uppercasenonmath}[1]{}}{}
\newcommand{\C}{{\mathbb{C}}}
\newcommand{\R}{{\mathbb{R}}}
\newcommand{\Z}{{\mathbb{Z}}}
\DeclareMathOperator{\Log}{Log}
\newtheorem{theorem}{Theorem}[section]
\newtheorem{lemma}[theorem]{Lemma}
\theoremstyle{definition}
\newtheorem{definition}[theorem]{Definition}
\theoremstyle{remark}
\newtheorem{remark}[theorem]{Remark}
\newtheorem{example}[theorem]{Example}
\newcommand{\To}{\Rightarrow}
\renewcommand{\subset}{\subseteq}
\newcommand{\m}{\mathfrak{m}}
\newcommand{\p}{\mathfrak{p}}
\newcommand{\x}{\mathsf{x}}
\newcommand{\z}{\mathsf{z}}
\renewcommand{\P}{\mathbb{P}}
\newcommand{\Sper}{\mathrm{Sper}}
\newcommand{\conj}[1]{\overline{#1}}
\begin{document}
\title[~]
{A Positivstellensatz for forms on the positive orthant}
\author[~]
{Claus Scheiderer and Colin Tan}

\address
{Claus Scheiderer,
Fachbereich Mathematik und Statistik,
Universt\"at Konstanz,
Konstanz 78457,
Germany}
\email{claus.scheiderer@uni-konstanz.de}

\address
{Colin Tan,
Department of Statistics \& Applied Probability,
National University of Singapore,
Block S16,
6 Science Drive 2,
Singapore 117546}
\email{statwc@nus.edu.sg}

\keywords{
Polynomials, positive coefficients
}
\subjclass[2010]{
Primary 12D99; secondary 14P99, 26C99
}
\begin{nouppercase}
\maketitle
\end{nouppercase}
\numberwithin{equation}{section}

\begin{abstract}
Let $p$ be a nonconstant form in $\R[x_1,\dots,x_n]$ with
$p(1,\dots,1)>0$. If $p^m$ has strictly positive coefficients for
some integer $m\ge1$, we show that $p^m$ has strictly positive
coefficients for all sufficiently large~$m$.
More generally, for any such $p$, and any
form $q$ that is strictly positive on $(\R_+)^n\setminus\{0\}$, we
show that the form $p^mq$ has strictly positive coefficients for all
sufficiently large~$m$. This result can be considered as a strict
Positivstellensatz for forms relative to $(\R_+)^n\setminus\{0\}$.
We give two proofs, one based on results of Handelman, the other on
techniques from real algebra.
\end{abstract}


\section{Introduction} \label{sec: intro}

Given polynomials $p, q \in \R[\x] := \R[x_1, \dots, x_n]$
where all the coefficients of $p$ are nonnegative, Handelman
\cite{Handelman86} gave a necessary and sufficient condition
(reproduced as Theorem
\ref{thm: HandelmanCharacterization} below) for there to exist a
nonnegative integer $m$ such that the coefficients of $p^m q$ are all
nonnegative. In another paper \cite{Handelman92}, Handelman showed
that, given a polynomial $p \in \R[\x]$ such that
$p(1, \dots, 1) > 0$, if the coefficients of $p^m$ are all
nonnegative for some $m > 0$, then the coefficients of $p^m$ are all
nonnegative for every sufficiently large $m$.

In the case where $p$ is a form (i.e.\ a~homogeneous polynomial),
there is a stronger positivity condition that $p$ may satisfy.
If $p(x) = \sum_{|w| = d} a_wx^w \in \R[\x]$ is homogeneous
of degree $d$ (with $a_w\in\R$), we say that $p$ {\emph{has strictly
positive coefficients}} if $a_w > 0$ for all $|w| = d$.
Here we use standard multi-index notation, where an $n$-tuple
$w = (w_1,\ldots, w_n)$ of nonnegative integers has length
$|w| := w_1 + \cdots + w_n$ and	$x^{w} = x_1^{w_1} x_2^{w_2}
\cdots x_n^{w_n}$.	
Denote the closed positive orthant of real $n$-space by
$$\R_+^n\>:=\>\{x=(x_1,\dots,x_n)\in\R^n\colon x_1,\dots,x_n
\ge0\}.$$
Our main result is as follows:

\begin{theorem} \label{thm: strictPositivstellensatz}
Let $p \in \R[\x]$ be a nonconstant real form.
The following are equivalent:
\begin{enumerate}[(A)]
\item \label{thm: someOddPos}
The form $p^m$ has strictly positive coefficients for some odd
$m\ge1$.
\item \label{thm: somePos}
The form $p^m$ has strictly positive coefficients for some $m\ge1$,
and $p(x) > 0$ at some point $x \in \R_+^n$.
\item \label{thm: eventualPos}
For each real form $q \in \R[\x]$ strictly positive on $\R_+^n
\setminus\{0\}$, there exists a positive integer $m_0$ such that
$p^m q$ has strictly positive coefficients for all $m \ge m_0$.
\end{enumerate}
\end{theorem}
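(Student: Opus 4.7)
The plan is to prove the cycle $(A) \Rightarrow (B) \Rightarrow (C) \Rightarrow (A)$, with the last two arrows short and $(B) \Rightarrow (C)$ carrying the real content. For $(A) \Rightarrow (B)$, if $p^m$ has strictly positive coefficients with $m$ odd, then evaluating at $\mathbf{1} = (1,\dots,1)$ gives $p(\mathbf{1})^m > 0$, and the oddness of $m$ forces $p(\mathbf{1}) > 0$. For $(C) \Rightarrow (A)$, the constant form $q = 1$ is strictly positive on $\R_+^n \setminus \{0\}$; applying (C) yields $p^m$ with strictly positive coefficients for all sufficiently large $m$, and any odd such $m$ works.

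For $(B) \Rightarrow (C)$, I would first show that $p > 0$ on $\R_+^n \setminus \{0\}$. Since $p^{m_1}$ has strictly positive coefficients, it is strictly positive on this set, and hence $p$ does not vanish there; because $\R_+^n \setminus \{0\}$ is connected and the witness $x \in \R_+^n$ with $p(x)>0$ is nonzero (as $p(0)=0$ for the nonconstant form $p$), $p$ must be strictly positive throughout $\R_+^n \setminus \{0\}$.

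Setting $F := p^{m_1}$, I would then invoke Handelman's characterization (Theorem \ref{thm: HandelmanCharacterization}) applied to $F$ (whose coefficients are nonnegative) paired with each form $\tilde q_r := p^r q$ for $r \in \{0, 1, \dots, m_1 - 1\}$, each of which is strictly positive on $\R_+^n \setminus \{0\}$ by the first step. Handelman's criterion should produce $N_r$ such that $F^{N_r} \tilde q_r$ has nonnegative coefficients. Now $F$ has strictly positive coefficients (so its support is the full lattice simplex of degree $m_1 \deg p$), and strict positivity of $\tilde q_r$ on the orthant forces the vertex-monomial coefficient $x_i^{\deg(F^{N_r}\tilde q_r)}$ of $F^{N_r}\tilde q_r$ to be positive for every $i$; a pigeonhole/Newton-polytope argument then promotes nonnegativity to strict positivity for $F^M \tilde q_r = p^{m_1 M + r} q$ whenever $M$ exceeds some threshold $M_r$. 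Taking the maximum over residues $r$ exhausts all residue classes modulo $m_1$, yielding (C) with $m_0 := \max_r(m_1 M_r + r)$.

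The main obstacle will be the Handelman step: verifying that the strict positivity of each $\tilde q_r$ on the orthant does imply the Handelman criterion relative to $F$, and then cleanly carrying out the promotion from ``nonnegative coefficients for some $N$'' to ``strictly positive coefficients for all large $N$''. This is precisely where the characterization of \cite{Handelman86} (or, as the authors promise in the abstract, an alternative argument via real algebra) carries the weight.
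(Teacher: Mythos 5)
Your high-level plan coincides with the paper's first proof: the two trivial implications, the argument that (B) forces $p>0$ on $\R_+^n\setminus\{0\}$, the idea of working residue class by residue class modulo the given exponent $m_1$, and the reduction to a Handelman-type eventual-nonnegativity lemma. The skeleton is sound. But the central step --- verifying that each pair $(F,\tilde q_r)$ with $F=p^{m_1}$ and $\tilde q_r = p^r q$ satisfies Handelman's two conditions --- is exactly where you declare an ``obstacle'' and stop, and this is not a small technicality; it is the real content of the proof.

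The specific missing idea is this. Before applying Handelman's criterion, the paper replaces the target form $g$ by $h:=g-g'$, where $g'$ is a form with strictly positive coefficients (e.g.\ a small multiple of $(x_1+\cdots+x_n)^{\deg g}$) chosen so that $h$ is still strictly positive on $\R_+^n\setminus\{0\}$ \emph{and} $\Log(h)=(\Z_+^n)_{\deg g}$, i.e.\ every coefficient of $h$ is nonzero. Full support of $\Log(h)$ is what makes the stratum analysis tractable: the paper proves (Lemma~\ref{domstrat}) that the relative faces of $\Log(F)$ (a full simplex, since $F$ has strictly positive coefficients) are indexed by subsets $J\subset[n]$, and --- crucially, in part (c) of that lemma --- that \emph{when $S$ is the full simplex} the only dominant stratum with respect to a proper face $F_J$ is $E_{J,0}=\{w\in S: w_J=0\}$, corresponding to restriction to a coordinate subspace. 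This is what allows the induction on the number of variables: $h_{E_{J,0}}$ is the restriction of $h$ to $\{x_j=0: j\in J\}$, which is again strictly positive on the smaller orthant. If you apply Handelman directly to $\tilde q_r$, whose Newton diagram need not be the full simplex, dominant strata $E_{J,\beta}$ with $\beta\neq0$ can appear, $(\tilde q_r)_{E}$ is then not a coordinate restriction, and there is no reason it should be positive on the interior of the orthant; the inductive hypothesis does not apply. So the verification you flag as ``should'' hold actually fails without the full-support replacement.

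The full-support replacement also subsumes your promotion step, more cleanly. Once $f^l h$ has nonnegative coefficients, $f^l g = f^l g' + f^l h$ already has strictly positive coefficients, since $f^l g'$ is a product of two full-support nonnegative forms. Your alternative promotion via vertex coefficients of $F^{N_r}\tilde q_r$ and a Newton-polytope counting bound does work (the coefficient of $x_i^{\deg}$ equals the value at the $i$-th unit vector, which is positive, and for $M$ large every exponent $\gamma$ of the right degree has some $\gamma_i \ge \deg(F^{N_r}\tilde q_r)$), but it is considerably more awkward than the paper's splitting and is unnecessary once the splitting is in place. In short: the proposal is on the right track structurally, but the two things it is missing --- normalizing to a form with full Newton diagram, and the explicit classification of dominant strata in Lemma~\ref{domstrat} --- are precisely the load-bearing ingredients of the paper's Handelman-based proof.
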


Theorem \ref{thm: strictPositivstellensatz} can be derived from an
isometric imbedding theorem for holomorphic bundles, due to
Catlin-D'Angelo \cite{CD99}. The argument is sketched in an
appendix at the end of this paper. Another condition equivalent to
each of the three conditions Theorem
\ref{thm: strictPositivstellensatz} was given by the second author
and To in \cite{TanTo}. The line of argumentation in \cite{TanTo} is
analytic in nature, and the proof therein invokes Catlin-D'Angelo's
isometric embedding theorem.

As the statement of Theorem \ref{thm: strictPositivstellensatz}
involves only real polynomials, it is desirable to give a purely
algebraic proof, which is what we shall do below. We will in fact
give two proofs of very different nature. Both are
independent of Catlin-D'Angelo's proof in \cite{CD99}, which uses
compactness of the von Neumann operator on pseudoconvex domains of
finite-type domains in $\C^n$ and an asymptotic expansion of the
Bergman kernel function by Catlin \cite{Catlin99}. We remark that in
the case when $n = 2$, Theorem \ref{thm: strictPositivstellensatz}
follows from De~Angelis' work in \cite{deAngelis03} and has been
independently observed by Handelman \cite{HandelmanMO}.

Our first proof of Theorem \ref{thm: strictPositivstellensatz} uses
the criterion of Handelman \cite{Handelman86} mentioned above.
Our second proof reduces Theorem
\ref{thm: strictPositivstellensatz} to the archimedean local-global
principle due to the first author, in a spirit similar to
\cite{Scheiderer12}.

For a real form, having strictly positive coefficients is a
certificate for being strictly positive on $\R_+^n\setminus\{0\}$.
Therefore Theorem \ref{thm: strictPositivstellensatz} can be seen as
a Positivstellensatz for forms $q$, relative to $\R_+^n\setminus
\{0\}$. In particular, the case where $p = x_1 + \cdots + x_n$
specializes to the classical P\'olya Positivstellensatz
\cite{Polya28} (reproduced in \cite[pp. 57--60]{HLP52}).
For any $n \ge 2$ and even $d \ge 4$, there are examples of
degree-$d$ $n$-ary real forms $p$ with some negative coefficient
that satisfy the equivalent conditions of Theorem
\ref{thm: strictPositivstellensatz} (see Example \ref{eq: DV} below).
\medbreak

\paragraph{{\textbf{Acknowledgements.}}}
The second author would like to thank his PhD supervisor Professor
Wing-Keung To for his continued guidance and support. We would also
like to thank David Handelman for his answer on MathOverflow
\cite{HandelmanMO}, and the anonymous referee for pointing out
reference \cite{kr}.


\section{A theorem of Handelman}

Let $p = \sum_{w \in \Z^n} c_w x^w\in\R[\x,\x^{-1}]
:= \R[x_1, \ldots, x_n, x_1^{-1}, \ldots, x_n^{-1}]$ be a Laurent
polynomial. Following Handelman \cite{Handelman86} we introduce the
following terminology. The {\emph{Newton diagram}} of $p$ is the set
$\Log(p) := \{w \in \Z^n : c_w \ne0\}$. A subset $F$ of $\Log(p)$ is
a \emph{relative face} of $\Log(p)$ if there exists a face
$K$ of the convex hull of $\Log(p)$ in $\R^n$ such that
$F = K \cap \Log(p)$.
In particular, the subset $\Log(p)$ is itself a relative face of
$\Log(p)$, called the \emph{improper relative face}. Given a set
$F\subset\Z^n$, an integer $k\ge1$ and a point $z\in\Z^n$, we write
$kF+z:=\{w^{(1)}+\cdots+w^{(k)}+z\colon w^{(1)},\dots,w^{(k)}\in
F\}\subset\Z^n$. For a subset $E$ of $\Z^n$ and the above Laurent
polynomial $p$ we write $p_E:=\sum_{w\in E}c_wx^w$.

\begin{definition}\label{dfnstratum}
Let $p\in\R[\x,\x^{-1}]$ be a nonzero Laurent polynomial. Given a
relative face $F$ of
$\Log(p)$ and a finite subset $S$ of $\Z^n$, a {\emph{stratum of $S$
with respect to $F$}} is a nonempty subset $E\subset S$ such that
\begin{enumerate}[(i)]
\item\label{cond: stratumOne}
there exist $k\ge1$ and $z\in\Z^n$ such that $E\subset kF+z$;
and
\item\label{cond: stratumTwo}
whenever $E\subset kF+z$ for some $z\in\Z^n$ and some $k\ge1$, it
follows that $E=(kF+z)\cap S$.
\end{enumerate}
A stratum $E$ of $S$ with respect to $F$ is \emph{dominant} if, in
addition, the following holds:
\begin{enumerate}[(i)]
\setcounter{enumi}{2}
\item \label{cond: stratumDominant}
If $E\subset(k\Log(p)+z)\setminus(kF+z)$ for some $k\ge1$ and some
$z\in\Z^n$, then $(kF+z)\cap S=\emptyset$.
\end{enumerate}
\end{definition}

\begin{theorem}[Handelman {\cite[Theorem A]{Handelman86}}]
  \label{thm: HandelmanCharacterization}
Let $p$ and $q$ be Laurent polynomials in $\R[\x,\x^{-1}]$, where $p$
has nonnegative coefficients. Then $p^m q$ has nonnegative
coefficients for some positive integer $m$ if, and only if, both the
following conditions hold:
\begin{enumerate}[\indent(a)]
\item \label{cond: HandelmanConditionOne}
For each dominant stratum $E$ of $\Log(q)$ with respect to the
improper relative face $\Log(p)$, the polynomial $q_{E}$ is strictly
positive on the interior of $\R_+^n$.
\item \label{cond: HandelmanConditionTwo}
For each proper relative face $F$ of $\Log(p)$, and each dominant
stratum $E$ of $\Log(q)$ with respect to $F$, there exists a positive
integer $m$ such that $p_F^m  q_E$ has nonnegative coefficients.
\end{enumerate}
\end{theorem}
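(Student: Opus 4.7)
The plan is to analyze the coefficients of $p^m q$ through the Newton polytope geometry, proving the two directions separately. Write $P:=\operatorname{conv}(\Log(p))$ and $Q:=\operatorname{conv}(\Log(q))$ in $\R^n$. Since $p$ has nonnegative coefficients, the support of $p^m q$ lies in $m\Log(p)+\Log(q)$, and its convex hull is the Minkowski sum $mP+Q$. The faces of $mP+Q$ are precisely the sums $mK+L$ where $K$ is a face of $P$ and $L$ is a face of $Q$; the lattice points in these faces form the supports on which the coefficients of $p^m q$ localize to the face products $p_F^m q_{L\cap\Log(q)}$.

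\textbf{Necessity.} Suppose $p^m q$ has nonnegative coefficients. For condition (b), fix a proper relative face $F$ of $\Log(p)$ and a dominant stratum $E$ of $\Log(q)$ with respect to $F$. The dominance condition (iii) is precisely designed to guarantee that any lattice point in $(kF+z)\cap\Log(q)=E$ arises in $m\Log(p)+\Log(q)$ only through the face product $p_F^m\cdot q_E$, and never through a decomposition using an exponent of $\Log(p)$ off $F$. Hence for an appropriate $m$ the restriction of $p^m q$ to exponents in $mF+E$ coincides with $p_F^m q_E$, which must therefore have nonnegative coefficients. For condition (a), take a dominant stratum $E$ with respect to the improper face $\Log(p)$. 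Evaluating at a point $t$ in the interior of $\R_+^n$ and comparing $(p^m q)(t)$ to the multinomial expansion shows that the terms supported near the centroid of $mP+Q$ are controlled by $p(t)^m q_E(t)$ up to strictly positive combinatorial weights; nonnegativity of the coefficients then forces $q_E(t)\ge 0$, and the strict inequality follows from a perturbation argument (or from the openness of the positivity condition in $t$).

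\textbf{Sufficiency.} Assume (a) and (b) and construct $m$ stratum by stratum. Stratify $m\Log(p)+\Log(q)$ according to the lowest-dimensional face $mF+\operatorname{conv}(E)$ of $mP+Q$ that contains a given exponent $w$. For each proper $F$ and dominant stratum $E$ with respect to $F$, condition (b) provides an $m_F$ with $p_F^{m_F}q_E$ having nonnegative coefficients; writing $p^m=p_F\cdot p^{m-1}+(p-p_F)p^{m-1}$ and expanding inductively, the boundary layer $mF+E$ receives a dominant contribution of a positive multiple of $p_F^{m_F}q_E$ times additional nonnegative factors from $p^{m-m_F}$, while contributions from exponents of $\Log(p)\setminus F$ only push the support strictly into the interior of $mP+Q$ in the normal direction to $F$. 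For the improper face, condition (a) together with a Laplace/saddle-point asymptotic for the multinomial expansion of $p^m$ shows that the coefficients in the deep interior of $mP+Q$ are asymptotically $m^{-n/2}$ times a strictly positive evaluation of $q_E$ at an interior point, hence positive for large $m$. Non-dominant strata need no separate treatment: by definition they are dominated, so their coefficients receive strictly positive contributions from a higher-face product already handled.

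\textbf{Main obstacle.} The essential difficulty is the bookkeeping near lower-dimensional faces, where a single exponent $w$ may collect contributions from many face decompositions simultaneously. The argument must choose $m$ large enough that, in every transition zone between a face $mF+E$ and the interior of $mP+Q$, the strictly positive interior contribution (guaranteed by (a) applied along the face closures of $P$) dominates any possibly negative error from boundary terms not yet covered by (b). This is the reason one must iterate the construction over faces in order of increasing dimension, using the certificate for lower faces to secure positivity before moving outward. The "dominant stratum" definition is exactly what is needed to make this inductive bootstrap close: it isolates precisely those faces whose contributions cannot be absorbed into contributions from higher-dimensional faces, so that (a) and (b) together suffice to certify every coefficient.
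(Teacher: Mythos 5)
The paper does not prove this statement at all: it is quoted verbatim as Handelman's Theorem~A and used as a black box, so there is no in-paper argument to compare yours against. Judged on its own terms, your proposal is an outline of a genuinely hard external theorem, and it has real gaps rather than just missing routine details. First, you conflate strata with faces of Newton polytopes. A stratum of $\Log(q)$ with respect to $F$ is defined through translated dilates $kF+z$, so (for instance) the strata with respect to the improper face $\Log(p)$ partition $\Log(q)$ according to cosets of the subgroup of $\Z^n$ generated by differences of elements of $\Log(p)$; they are in general not of the form $L\cap\Log(q)$ for a face $L$ of $\operatorname{conv}(\Log(q))$, and your opening claim that the faces of $mP+Q$ are \emph{all} sums $mK+L$ is also wrong as stated (only pairs of faces exposed by a common linear functional occur). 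Because of this, your necessity argument for (a), which looks at a single ``centroid'' region, does not even see the decomposition of $p^mq$ into coset pieces that condition (a) is really about. Second, the strictness in (a) is not obtained by ``openness of positivity in $t$'' or a perturbation: from nonnegativity of the coefficients of $p^mq$ one only gets $q_E\ge0$ on the open orthant, and ruling out an interior zero requires a quantitative local-limit/central-coefficient estimate (the example $p=x+y$, $q=(x-y)^2$, where the middle coefficients of $(x+y)^m(x-y)^2$ are negative for every $m$, shows exactly what has to be proved and why soft arguments cannot do it).

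Third, and most seriously, the sufficiency direction --- the substance of Handelman's theorem --- is asserted rather than proved. The Laplace/saddle-point heuristic for interior coefficients, the claim that contributions from $\Log(p)\setminus F$ ``only push the support into the interior,'' the claim that non-dominant strata are automatically absorbed by higher-face contributions, and above all the uniform control of the transition zones between a face $mF+E$ and the interior of $mP+Q$ are precisely the content of the theorem; your own ``main obstacle'' paragraph names the difficulty but supplies no estimates or inductive mechanism that closes it. Note also that a correct argument must produce a single $m$ that works simultaneously for all faces and all exponents, which requires uniform (not pointwise) asymptotics. So the proposal should be regarded as a plausible strategy sketch in the spirit of Handelman's and De~Angelis' methods, not as a proof; within the present paper the honest course is simply to cite \cite{Handelman86}, as the authors do.
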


\noindent
Here, for a Laurent polynomial $f$, by ``$f$ has nonnegative
coefficients'', we mean that all coefficients of $f$
are nonnegative. As observed in \cite{Handelman86},
the product of a suitable monomial with $p_F$ (resp. $f_E$) is a
Laurent polynomial involving fewer than $n$ variables (when $F$ is
proper), so that the condition \eqref{cond: HandelmanConditionTwo} is
inductive.


\section{First proof of Theorem \ref{thm: strictPositivstellensatz}}
    \label{sec: proofOfMainTheorem}

We fix an integer $n\ge1$ and use the notation $\x=(x_1,\dots,x_n)$
and $[n]=\{1,\dots,n\}$. Given $z\in\Z^n$ and a subset $J$ of $[n]$,
let $z_J:=(z_j)_{j\in J}\in\Z^J$ denote the corresponding
truncation of~$z$. For a nonnegative integer $d$, we write
$(\Z^n_+)_d=\{w\in\Z^n\colon
w_1\ge0,\dots,w_n\ge0$, $w_1+\cdots+w_n=d\}$.

\begin{lemma}\label{domstrat}
Let $p\in\R[\x]$ be a form of degree $d\ge1$ with strictly positive
coefficients. Let $e\ge0$, and let $S\subset(\Z^n_+)_e$ be a nonempty
subset.
\begin{itemize}
\item[(a)]
The relative faces of $\Log(p)$ are the sets $F_J:=\{w\in (\Z^n_+)_d
\colon w_J=0\}$, where $J\subset[n]$ is a subset.
\item[(b)]
Let $J\subset [n]$. For each stratum $E$ of $S$ with respect to
$F_J$, there exists $\beta\in \Z_+^J$ satisfying $|\beta|\le e$
such that
$$E\>=\>E_{J,\beta}\>:=\>\{w\in S\colon w_J=\beta\}$$
\item[(c)]
If $S=(\Z_+^n)_e$ and $\emptyset\ne J\subsetneq[n]$, the stratum
$E_{J,\beta}$ of $S$ with respect to $F_J$ is dominant if and only if
$\beta=0$.
\end{itemize}
\end{lemma}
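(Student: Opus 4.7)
The plan is to work directly from the definitions of relative face, stratum, and dominant stratum, exploiting the concrete fact that since $p$ has strictly positive coefficients of degree $d$, the Newton diagram is the entire $d$-simplex $\Log(p)=(\Z_+^n)_d$. For part~(a), I would identify the convex hull of $\Log(p)$ with the scaled standard simplex $\{x\in\R_+^n\colon x_1+\cdots+x_n=d\}$, whose face lattice is classical: faces correspond to subsets $J\subset[n]$ indexing the coordinates that vanish on the face. Intersecting each face with $\Log(p)$ recovers exactly $F_J$, and $J=\emptyset$ gives the improper face.

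For part~(b), I would first record the identity $kF_J+z=\{\xi+z\colon\xi\in\Z_+^n,\ \xi_J=0,\ |\xi|=kd\}$, valid because every point of $(\Z_+^{[n]\setminus J})_{kd}$ decomposes as a $k$-fold Minkowski sum of elements of $F_J$. Then any stratum $E$ with $E\subset kF_J+z$ has every element sharing the common $J$-truncation $\beta:=z_J$, forcing $E\subset E_{J,\beta}$ (and automatically $|\beta|\le e$). For the reverse inclusion I would exhibit a pair $(k',z')$ witnessing $E_{J,\beta}\subset k'F_J+z'$: pick $k'$ large enough that $k'd\ge e-|\beta|$, set $z'_J:=\beta$, and let $z'_{[n]\setminus J}$ be any nonpositive integer vector of total sum $|\beta|-k'd$. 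Condition~(ii) of Definition~\ref{dfnstratum}, applied to $E$ with $(k',z')$, then yields $E=(k'F_J+z')\cap S\supset E_{J,\beta}$, hence equality.

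For part~(c) I specialize to $S=(\Z_+^n)_e$ and $\emptyset\ne J\subsetneq[n]$. If $\beta=0$ and $E_{J,0}\subset(k\Log(p)+z)\setminus(kF_J+z)$, then since $w_J=0$ for $w\in E_{J,0}$ the vector $-z_J=(w-z)_J$ is a nonzero element of $\Z_+^J$, so $z_J\le0$ with some strictly negative coordinate; every element of $kF_J+z$ has $J$-truncation $z_J\notin\Z_+^J$, so $(kF_J+z)\cap S=\emptyset$, yielding dominance. Conversely, if $\beta\ne0$ I would construct an explicit witness to non-dominance: pick $j_0\in J$ with $\beta_{j_0}\ge1$ and $j_1\in[n]\setminus J$, write $\delta,\delta'$ for the standard basis vectors at $j_0,j_1$, take $k$ large enough that $c:=kd-e+|\beta|-1\ge0$, and set $z_J:=\beta-\delta$, $z_{[n]\setminus J}:=-c\,\delta'$. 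A direct degree check shows $E_{J,\beta}\subset(k\Log(p)+z)\setminus(kF_J+z)$, while the vector with $J$-part $\beta-\delta$, value $e-|\beta|+1$ at coordinate $j_1$, and zero elsewhere lies in $(kF_J+z)\cap S$, contradicting dominance.

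The delicate point is the simultaneous balancing of the three requirements in this last construction against the degree constraint $|w|=e$; this is where the hypotheses $\beta\ne0$ (to have $\delta\le\beta$), $|\beta|\le e$ (so that $c\ge0$ for large $k$), and $J\subsetneq[n]$ (to supply $j_1$) each enter essentially.
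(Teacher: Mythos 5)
Your proposal follows essentially the same route as the paper: identify $\Log(p)$ with the dilated standard simplex $(\Z_+^n)_d$ and read off the relative faces $F_J$; compute $kF_J+z$ explicitly, deduce that any stratum is contained in some $E_{J,\beta}$ with $\beta=z_J$, and then close the gap via condition (ii) of Definition~\ref{dfnstratum} by exhibiting a witness pair; and for (c), build explicit pairs $(k,z)$ with $0\le z_J\le\beta$, $z_J\ne\beta$, $z_{[n]\setminus J}\le0$, $|z|=e-kd$ to defeat dominance when $\beta\ne0$, and use the sign obstruction $z_J<0$ somewhere to establish dominance when $\beta=0$. The argument structure, including the use of the nonemptiness of $(kF_J+z)\cap S$ via a vector supported on $J\cup\{j_1\}$, matches the paper.

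One arithmetic slip in part (b): you set $z'_J=\beta$ and choose $z'_{[n]\setminus J}$ nonpositive with total sum $|\beta|-k'd$. With that choice $|z'|=2|\beta|-k'd$, so the degree of elements of $k'F_J+z'$ is $k'd+|z'|=2|\beta|$, which would need to equal $e$ for the containment $E_{J,\beta}\subset k'F_J+z'$ to be possible; in general it does not. The sum should instead be $e-|\beta|-k'd$ (so that $|z'|=e-k'd$ and degrees match), which is $\le0$ precisely under the constraint $k'd\ge e-|\beta|$ you already identified. With that correction the witness pair works exactly as in the paper. Otherwise the proof is sound, and your treatment of (c) is slightly more explicit than the paper's (you spell out the witness $z$ and the vector in $(kF_J+z)\cap S$ rather than asserting their existence), but the content is identical.
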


In particular, $E=S$ is the only stratum of $S$ with respect to the
improper relative face $\Log(p)$ of $\Log(p)$, by (b). Note that this
stratum is dominant for trivial reasons.

\begin{proof}
By assumption we have $\Log(p)=(\Z^n_+)_d$. Denote this set by $F$.
Assertion (a) is clear. Note that $J=\emptyset$ resp.\ $J=[n]$
gives $F_J=F$ resp.\ $F_J=\emptyset$.
To prove (b), fix a subset $J\subset[n]$, and let
$E\subset S$ be a stratum of $S$ with
respect to $F_J$. So there exist $k\ge1$ and $z\in\Z^n$ such that
$E=(kF_J+z)\cap S$. By the particular shape of $F$ we have
$$kF_J+z\>=\>\{w\in\Z^n\colon|w|=ke+|z|,\ w\ge z,\ w_J=z_J\},$$
where $w\ge z$ means $w_i\ge z_i$ for $i=1,\dots,n$.
Therefore $E\subset E_{J,\beta}$ with $\beta:=z_J$.
Note that $E_{J,\beta}$ can be nonempty only when $|\beta|\le e$.
The proof of (b) will be completed if we show that
$E_{J,\beta}\subset lF_J+y$ holds for suitable $l\ge1$ and
$y\in\Z^n$.
To this end it suffices to observe that there exist $l\ge1$ and
$y\in\Z^n$ such that $ld\ge e$, $|y|=e-ld$, $y_J=\beta$ and
$y_i\le0$ for $i\in[n]\setminus J$.
These $l$ and $y$ will do the job.

It remains to prove (c), so assume now that $S=(\Z^n_+)_e$. Let
$J\subsetneq [n]$ be a proper subset, and let $\beta\in\Z_+^J$ be
such that $E_{J,\beta}$ is nonempty (hence a stratum of $S$). First
assume $\beta\ne0$. There exist $k\ge1$ and $z\in\Z^n$ such that
$0\le z_J\le\beta$ and $z_J\ne\beta$, such that $z_{[n]\setminus J}
\le0$, and such that $|z|=e-kd$.
Then $E_{J,\beta}\subset kF+z$
and $E_{J,\beta}\cap(kF_J+z)=\emptyset$.
But there exists $w\in S$ with $w_J=z_J$, showing that $(kF_J+z)
\cap S\ne\emptyset$, whence $E_{J,\beta}$ is not dominant. On the
other hand, $E_{J,\beta}$ is easily seen to be dominant when
$\beta=0$.
\end{proof}

We now give a first proof of Theorem
\ref{thm: strictPositivstellensatz}. The implications
\eqref{thm: someOddPos} $\To$ \eqref{thm: somePos} and
\eqref{thm: eventualPos} $\To$ \eqref{thm: someOddPos} are trivial.
To prove \eqref{thm: somePos} $\To$ \eqref{thm: eventualPos}, it
suffices to show the following apparently weaker statement:

\begin{lemma}\label{weakerstatement}
Given forms $f,\,g\in\R[x_1,\dots,x_n]$, where $f$ is nonconstant
with strictly positive coefficients and where $g$ is strictly
positive on $\R^n_+\setminus\{0\}$, there exists $l\ge1$ such that
$f^lg$ has nonnegative coefficients.
\end{lemma}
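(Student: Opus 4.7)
I plan to prove Lemma \ref{weakerstatement} by induction on $n$, applying Handelman's criterion (Theorem \ref{thm: HandelmanCharacterization}) to $p=f$ and $q=g$. The base case $n=1$ is immediate: a nonzero form in one variable that is strictly positive on $\R_+\setminus\{0\}$ is a positive multiple of $x_1^e$, and $fg$ then already has a positive coefficient. For the inductive step, I verify conditions (a) and (b) of Handelman's theorem. Condition (a) is immediate: by Lemma \ref{domstrat}(b) with $J=\emptyset$, the unique stratum of $\Log(g)$ with respect to the improper face $\Log(f)=(\Z_+^n)_d$ is $\Log(g)$ itself, and $g_{\Log(g)}=g$ is strictly positive on the interior of $\R_+^n$ by hypothesis.

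For condition (b), fix a proper relative face $F_J$ with $\emptyset\ne J\subsetneq[n]$ (the case $J=[n]$ gives $F_J=\emptyset$, hence no strata). By Lemma \ref{domstrat}(b), every stratum of $\Log(g)$ with respect to $F_J$ has the form $E_{J,\beta}$ for some $\beta\in\Z_+^J$. The crux is to extend Lemma \ref{domstrat}(c) from $S=(\Z_+^n)_e$ to $S=\Log(g)$: \emph{every dominant stratum $E_{J,\beta}$ must satisfy $\beta=0$}. Here the positivity of $g$ enters decisively. Since $g>0$ on $\R_+^n\setminus\{0\}$ and $J\ne[n]$, the restriction $g|_{x_J=0}$ is a nonzero form, so there exists $w\in\Log(g)$ with $w_J=0$. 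If $\beta\ne0$, then for any integer $k>e/d$, pick $i_0\in[n]\setminus J$ and define $z\in\Z^n$ by $z_J=0$, $z_{i_0}=e-kd$, and $z_i=0$ for the remaining coordinates. One checks that $z\le w'$ componentwise for every $w'\in E_{J,\beta}\subset(\Z_+^n)_e$ (so $E_{J,\beta}\subset k\Log(f)+z$), that $z_J=0\ne\beta$ (so $E_{J,\beta}\cap(kF_J+z)=\emptyset$), and that $w\in(kF_J+z)\cap\Log(g)$. This contradicts the dominance condition (iii) of Definition \ref{dfnstratum}.

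With $\beta=0$ forced, the remaining dominant stratum $E=E_{J,0}$ has $g_E=g|_{x_J=0}$, a nonzero form in the variables $\{x_i:i\notin J\}$ that is strictly positive on $\R_+^{n-|J|}\setminus\{0\}$, while $f_{F_J}=f|_{x_J=0}$ is a nonconstant form in these same variables with strictly positive coefficients. Since $n-|J|<n$, the inductive hypothesis yields some $m$ such that $f_{F_J}^m g_E$ has nonnegative coefficients, verifying (b). The main obstacle is the extension of Lemma \ref{domstrat}(c) to the support of an arbitrary form positive on $\R_+^n\setminus\{0\}$; the essential technical ingredient is the existence of the monomial $w\in\Log(g)$ with $w_J=0$ supplied by the strict positivity of $g$ on the face $\{x_J=0\}\cap\R_+^n$.
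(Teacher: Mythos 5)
Your proof is correct and follows the paper's overall strategy: induction on $n$ via Handelman's criterion (Theorem \ref{thm: HandelmanCharacterization}) together with the stratum analysis of Lemma \ref{domstrat}. The genuine point of divergence is how you deal with the fact that $\Log(g)$ may be a proper subset of $(\Z_+^n)_e$, which puts $S=\Log(g)$ outside the scope of Lemma \ref{domstrat}(c) as stated. The paper sidesteps this by splitting $g = g' + h$, where $g'$ is a small form with strictly positive coefficients chosen so that $h := g-g'$ is still strictly positive on $\R_+^n\setminus\{0\}$ \emph{and} has full support $\Log(h)=(\Z_+^n)_e$; Lemma \ref{domstrat}(c) then applies verbatim to $(f,h)$, and the nonnegativity of $f^l g = f^l h + f^l g'$ follows. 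You instead apply Handelman directly to $(f,g)$ and prove the needed extension of Lemma \ref{domstrat}(c) to $S=\Log(g)$, observing that $g>0$ on $\R_+^n\setminus\{0\}$ forces $g|_{x_J=0}\ne0$, which supplies the monomial $w\in\Log(g)$ with $w_J=0$ witnessing non-dominance when $\beta\ne0$; your explicit choice of $k$ and $z$ is correct. Both routes are sound: yours dispenses with the auxiliary form $g'$ at the cost of reproving a variant of Lemma \ref{domstrat}(c), while the paper reuses that lemma as stated at the cost of the small decomposition trick.
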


Assuming that Lemma \ref{weakerstatement} has been shown, we can
immediately state a stronger version of this lemma. Namely, under the
same assumptions it follows that $f^lg$ actually has strictly positive
coefficients for suitable $l\ge1$. Indeed, choose a form $g'$ with
$\deg(g')=\deg(g)$ such that $g'$ has strictly positive coefficients
and the difference $h:=g-g'$ is strictly positive on
$\R_+^n\setminus\{0\}$, for instance $g'=c(x_1+\cdots+x_n)^{\deg(g)}$
with sufficiently small $c>0$. Applying Lemma \ref{weakerstatement} to
$(f,h)$ instead of $(f,g)$ gives $l\ge1$ such that $f^lh$ has
nonnegative coefficients. Since $f^lg'$ has strictly positive
coefficients, the same is true for $f^lg=f^lg'+f^lh$.

Now assume that condition \eqref{thm: somePos} of Theorem
\ref{thm: strictPositivstellensatz} holds. Then the form $p$ is
strictly positive on $\R_+^n\setminus\{0\}$.
In order to prove \eqref{thm: eventualPos}, apply the strengthened
version of Lemma \ref{weakerstatement} to $(f,g)=(p^m,\,p^ih)$ for
$0\le i\le m-1$. This gives $l\ge1$ such that $p^{lm+i}h$ has
nonnegative coefficients for all $i\ge0$, which is
\eqref{thm: eventualPos}. So indeed it suffices to prove Lemma
\ref{weakerstatement}.

\begin{proof}[Proof of Lemma \ref{weakerstatement}]
The case $n = 1$ is trivial.
Suppose that $n>1$ and the above statement holds in less than $n$
variables.
Let $\deg(f)=d\ge1$ and $\deg(g)=e$. As before, choose a form $g'$
with $\deg(g')=e$ and with strictly positive coefficients such that
$h:=g-g'$ is strictly positive on $\R^n_+\setminus\{0\}$. This can
be done in such a way that $\Log(h)=(\Z^n_+)_e$, i.e.\ all
coefficients of $h$ are nonzero.

We shall verify that the pair $(f,h)$ satisfies the conditions in
Theorem \ref{thm: HandelmanCharacterization}. Since $f$ has strictly
positive coefficients, the only (dominant) stratum of $S=\Log(h)$
with respect to $F=\Log(f)$ is $E=S$, by Lemma \ref{domstrat}. Thus
$h_E=h$ is strictly positive on $\R_+^n\setminus\{0\}$,
so that condition \eqref{cond: HandelmanConditionOne} is satisfied.
Next, let $J\subset[n]$ be a proper nonempty subset. Using the
notation of Lemma \ref{domstrat}, the only dominant stratum of
$S=\Log(h) = (\Z_+^n)_e$ with respect to the proper relative face
$F_J$ of $F=\Log(f)$ is $E:=E_{J,0}=\{w\in S\colon w_J=0\}$,
according to Lemma \ref{domstrat}(c). Without loss of generality we
may assume $J=\{r+1,\dots,n\}$ for some $1\le r<n$, where $J$ has
cardinality $n-r$. Then $h_E$ is a form in $\R[x_1,\dots,x_r]$ that
is strictly positive on $\R_+^r\setminus\{0\}$, since
$h_E(x_1,\dots,x_r)=h(x_1,\dots,x_r,0,\dots,0)>0$ for all
$(x_1,\dots,x_r)\in\R_+^r\setminus\{0\}$. Moreover, $f_{F_J}$ is a
form in $\R[x_1,\dots,x_r]$ with strictly positive coefficients. By
the inductive hypothesis there exists $m\ge1$ such that all
coefficients of $(f_{F_J})^mh_E$ are nonnegative, which shows that
$(f,h)$ satisfies condition \eqref{cond: HandelmanConditionTwo}.
Therefore, by Theorem \ref{thm: HandelmanCharacterization},
there exists $l\ge1$ such that $f^lh$ has nonnegative coefficients.
\end{proof}


\section{Archimedean local-global principle for semirings}
    \label{sec:archlgpsemiring}

Let $A$ be a (commutative unital) ring, and let $T\subset A$ be a
subsemiring of $A$, i.e.\ a subset containing $0,\,1$ and closed
under addition and multiplication. Recall that $T$ is said to be
\emph{archimedean} if for any $f\in A$ there exists $n\in\Z$ with
$n+f\in T$, i.e.\ if $T+\Z=A$.
The real spectrum $\Sper(A)$ of $A$ (see e.g.\ \cite{bcr}
7.1, \cite{ma} 2.4) can be defined as the set of all pairs
$\alpha=(\p,\le)$ where $\p$ is a prime ideal of $A$ and $\le$ is an
ordering of the residue field of~$\p$.
Given a semiring $T\subset A$, let $X_A(T)\subset\Sper(A)$ be
the set of all $\alpha\in\Sper(A)$ such that $f\ge_\alpha0$ for every
$f\in T$. We say that $f\in A$ satisfies $f\ge0$ (resp.\ $f>0$) on
$X_A(T)$ if $f\ge_\alpha0$ (resp.\ $f>_\alpha0$) for every $\alpha\in
X_A(T)$.

We recall the archimedean Positivstellensatz in the following form.
In a weaker form, this result was already proved by Krivine
\cite{kr}.

\begin{theorem}[{\cite{sw}} Corollary~2] \label{archpss}
Let $A$ be a ring, and let $T\subset A$ be an archimedean semiring
containing $\frac1n$ for some integer $n>1$. If $f\in A$ satisfies
$f>0$ on $X_A(T)$, then $f\in T$.
\end{theorem}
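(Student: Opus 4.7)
\medskip

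\noindent\textbf{Proof plan.}
My plan is to argue by contradiction. Assume $f\notin T$; I will construct a point $\alpha\in X_A(T)$ at which $f\le_\alpha 0$, violating the hypothesis $f>_\alpha 0$.

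\emph{Step 1 (Zorn).} Consider the poset of all subsemirings $S\subseteq A$ with $T\subseteq S$ and $f\notin S$, ordered by inclusion. Unions of chains stay in this poset, so Zorn's lemma produces a maximal such $P$. Because $T\subseteq P$, any ordering in $X_A(P)$ automatically belongs to $X_A(T)$.

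\emph{Step 2 (Totality).} The crux of the proof is to show $A=P\cup(-P)$. Suppose some $a\in A$ satisfies $a\notin P$ and $-a\notin P$. By maximality, the semirings generated by $P\cup\{a\}$ and by $P\cup\{-a\}$ both contain $f$, giving polynomial identities
\[ f = \sum_{i=0}^{N} p_i\,a^i = \sum_{j=0}^{N} q_j\,(-a)^j, \qquad p_i,q_j\in P. \]
The archimedean hypothesis supplies an integer $M$ with $M\pm a\in T\subseteq P$, which lets one successively replace the high powers $a^i$ (for $i\ge 2$) by elements of $P$ modulo lower-order corrections. Combining the two identities and absorbing the resulting integer multipliers into the dense set of small positive scalars $k/n^\ell\in T$ (obtained from $1,\,1/n\in T$) forces $f\in P$, contradicting the choice of $P$. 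This totality step is the principal technical obstacle.

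\emph{Step 3 (Ordering and embedding).} Once totality is established, set $\p:=P\cap(-P)$. Standard verifications show $\p$ is a prime ideal and that $\overline{P}:=P/\p$ is a strict total positive cone on $A/\p$, which extends uniquely to an ordering of the fraction field $K$ of $A/\p$. Archimedeanness of $T$ together with the density of $\{k/n^\ell\}\subseteq T$ implies that this ordering on $K$ is archimedean over the rationals; consequently $K$ admits an order-preserving embedding into $\R$. The composition $A\twoheadrightarrow A/\p\hookrightarrow K\hookrightarrow\R$ is a ring homomorphism sending $T$ into $\R_{\ge 0}$, so it defines a point $\alpha\in X_A(P)\subseteq X_A(T)$. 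Since $f\notin P$, totality gives $-f\in P$, whence $f(\alpha)\le 0$, contradicting $f>_\alpha 0$.

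The main obstacle, as indicated, is the totality step: reconciling the two polynomial certificates for $f$ into a single expression that visibly lies in $P$. Both hypotheses on $T$ are used here in an essential way---archimedeanness to truncate high powers of $a$ via $M\pm a\in P$, and $1/n\in T$ to supply the dense positive rationals needed to absorb integer multiplicities---and without either of them the statement is known to fail.
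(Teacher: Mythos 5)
The paper does not prove Theorem~\ref{archpss} at all: it imports it verbatim from Schweighofer \cite{sw}, Corollary~2, and remarks only that a weaker form goes back to Krivine \cite{kr}. So there is no in-paper argument to measure your sketch against, and it has to be judged on its own. Your plan is the classical Kadison--Dubois/Jacobi route (Zorn on semirings $P\supseteq T$ avoiding $f$, totality of the maximal $P$, then a point of $\Sper(A)$ from $\p=P\cap(-P)$), which is indeed how such representation theorems are usually proved.

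But the two places you flag as ``standard'' or ``one does $X$'' are exactly where the substance of the theorem lives, and as written the sketch does not get through them. In Step~2 the manipulations you gesture at are not enough: from $M\pm a\in P$ one obtains $(M+a)(M-a)=M^2-a^2\in P$ and $(M+a)^2+(M-a)^2=2(M^2+a^2)\in P$, but the minus sign in the first and the factor $2$ in the second (you are only given $\tfrac1n\in T$, not $\tfrac12$) block any direct reduction of the two certificates $f=\sum p_ia^i$ and $f=\sum q_j(-a)^j$ to a representation of $f$ with all coefficients in $P$, because $P$ is not closed under subtraction or division by $2$. The honest proof of the totality lemma for archimedean \emph{preprimes} (as opposed to preorderings, which contain all squares) requires a careful limiting argument exploiting density of $\{k/n^\ell\}$, and one must show the resulting approximations stay inside $P$ --- this is a genuine obstacle, not a bookkeeping detail. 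Likewise, your claim that $\p=P\cap(-P)$ is a prime ideal is ``standard'' only when $P$ is a preordering (then $a^2\in P$ does the work); for a bare semiring you need a separate argument that again invokes maximality of $P$, and you do not give it. A minor point as well: once $\p$ is prime and $\bar P$ is an ordering of $A/\p$, the pair $(\p,\bar P)$ already determines a point of $\Sper(A)$ in $X_A(T)$ with $f\le 0$ there --- the detour through an embedding of the fraction field into $\R$, and the verification that the ordering is archimedean over $\Q$, is unnecessary.
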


We will need to apply Theorem \ref{archlgpsemiring} below, which is a
local-global principle for archimedean semirings. A slightly weaker
version of this result was already proved in \cite{bss} Theorem 6.5.
We give a new proof which is considerably shorter than the proof in
\cite{bss}.

\begin{theorem}\label{archlgpsemiring}
Let $A$ be a ring, let $T\subset A$ be an archimedean semiring
containing $\frac1n$ for some integer $n>1$, and let $f\in A$. Assume
that for any maximal ideal $\m$ of $A$ there exists an element $s\in
A\setminus\m$ such that $s\ge0$ on $X_A(T)$ and $sf\in T$. Then
$f\in T$.
\end{theorem}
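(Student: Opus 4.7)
The plan is to combine a compactness argument with a double application of the archimedean Positivstellensatz (Theorem \ref{archpss}). I would first exploit the hypothesis, localized at each maximal ideal, to produce a single global certificate $s\in A$ and then apply Theorem \ref{archpss} to conclude.

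For the cover step, recall that when $T$ is archimedean the set of closed points of $X_A(T)$ is compact: each closed point $\alpha$ corresponds to a ring homomorphism $\phi:A\to\R$ with $\phi(T)\subset\R_{\ge0}$, and the archimedean bounds $-N_a\le\phi(a)\le N_a$ embed this set as a closed subset of $\prod_{a\in A}[-N_a,N_a]$, which is compact by Tychonoff. Each closed point $\alpha$ has maximal support $\m_\alpha$, and by hypothesis there exists $s_\alpha\in A\setminus\m_\alpha$ with $s_\alpha\ge0$ on $X_A(T)$ and $s_\alpha f\in T$, forcing $s_\alpha(\alpha)>0$. The open sets $\{\beta: s_\alpha(\beta)>0\}$ therefore cover the closed points of $X_A(T)$, and extracting a finite subcover gives elements $s_1,\dots,s_k$ whose sum $s:=s_1+\cdots+s_k$ is strictly positive at every closed point of $X_A(T)$, hence (by specialization, which preserves strict positivity in the archimedean setting) strictly positive on all of $X_A(T)$; moreover $sf=\sum_i s_if\in T$ since $T$ is a semiring.

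Since $s>0$ on $X_A(T)$, Theorem \ref{archpss} gives $s\in T$. A standard strengthening of the same theorem, using compactness of closed points and the continuity of $s$, provides an integer $N\ge1$ with $Ns-1\in T$; intuitively $N\in T$ plays the role of an ``approximate inverse'' of $s$ in $T$.

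The third and most delicate step is to deduce $f\in T$ from the three certificates $s\in T$, $sf\in T$, and $Ns-1\in T$. The identity
\[
  N(sf)\;=\;f+(Ns-1)f
\]
writes $f$ as a difference of things involving the certificates, but naively only yields $f\in T-T=A$. I would close this gap by invoking Theorem \ref{archpss} a second time on a carefully chosen auxiliary element, exploiting that $f\ge0$ on $X_A(T)$ (this follows from the finite subcover, since at any closed point some $s_i$ is strictly positive and $s_if\ge0$ forces $f\ge0$ there) together with the partition-of-unity-like decomposition $s=\sum s_i$ scaled by $N$. The main obstacle will be this final step: the strict Positivstellensatz converts $g>0$ on $X_A(T)$ into $g\in T$, but $f$ may only be non-strictly positive on $X_A(T)$, so one cannot apply Theorem \ref{archpss} to $f$ directly. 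The essential trick is presumably an explicit algebraic identity expressing $f$ as a sum of products of elements of $T$ built from the certificates $s_if\in T$ and $Ns-1\in T$, which I expect to be short and slick once correctly set up.
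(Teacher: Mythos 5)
There is a genuine gap in the final step, and you acknowledge it yourself: you hope for ``an explicit algebraic identity\ldots{} which I expect to be short and slick once correctly set up,'' but no such identity is supplied, and I don't believe the three certificates $s\in T$, $sf\in T$, $Ns-1\in T$ suffice. From them, the best you get is $N(sf)=f+(Ns-1)f$, i.e.\ $f=N(sf)-(Ns-1)f$, a difference, and semirings are not closed under subtraction. Iterating produces an alternating series with no obvious truncation. The root cause is a tactical misstep earlier on: by summing the $s_i$ into one element $s$ you discard the partition structure, which is precisely what makes the conclusion accessible.

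The paper's proof keeps the individual $s_i$ and never forms the sum. It extracts $s_1,\dots,s_k$ with $\langle s_1,\dots,s_k\rangle=\langle1\rangle$ (this follows purely algebraically, since for every maximal ideal $\m$ there is an $s\notin\m$ in the given family, so the ideal they generate is not contained in any $\m$; one does need the hypothesis for all maximal ideals here, not only those coming from closed points of $X_A(T)$ as in your covering argument). The crucial ingredient you are missing is then \cite{sch:surf} Prop.~2.7: whenever $s_1,\dots,s_k$ generate the unit ideal, the coefficients in a Bezout relation $\sum_i a_is_i=1$ can be chosen so that each $a_i>0$ on $X_A(T)$. Once you have that, Theorem \ref{archpss} applied to each $a_i$ gives $a_i\in T$, and then
\[
  f\;=\;\Bigl(\sum_{i=1}^k a_is_i\Bigr)f\;=\;\sum_{i=1}^k a_i\,(s_if)\;\in\;T,
\]
since each $a_i$ and each $s_if$ lies in $T$. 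So the ``slick identity'' you were looking for is the Bezout identity itself, but it only does the job because the $a_i$ can be made pointwise positive on $X_A(T)$ --- a nontrivial refinement that must be quoted (or reproved), not invented on the spot. Without this, applying Theorem \ref{archpss} a second time to some auxiliary element does not close the gap, because $f$ itself need only be nonnegative, not strictly positive, on $X_A(T)$.

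Your compactness observations (Tychonoff embedding of the closed points, openness of $\{s_\alpha>0\}$, specialization preserving positivity) are all correct and do give a finite family $s_1,\dots,s_k$ of the right kind; the paper simply reaches the same finite family by a one-line algebraic argument. The fix is to stop before summing the $s_i$, invoke \cite{sch:surf} Prop.~2.7 for the positive-coefficient Bezout relation, and then conclude as above.
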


\begin{proof}
There exists an integer $k\ge1$ and elements $s_1,\dots,s_k\in A$
with $\langle s_1,\dots,s_k\rangle=\langle1\rangle$, and with
$s_if\in T$ and $s_i\ge0$ on $X_A(T)$ for $i=1,\dots,k$. By
\cite{sch:surf} Prop.\ 2.7 there exist $a_1,\dots,a_k\in A$ with
$\sum_{i=1}^ka_is_i=1$ and with $a_i>0$ on $X_A(T)$ ($i=1,\dots,k$).
Since $T$ is archimedean, the last condition implies $a_i\in T$, by
the Positivstellensatz \ref{archpss}. It follows that $f=\sum_{i=1}^k
a_i(s_if)\in T$.
\end{proof}


\section{Second proof of Theorem \ref{thm: strictPositivstellensatz}}
    \label{sec: 2ndproofOfMainTheorem}

As in the first proof, it suffices to prove Lemma
\ref{weakerstatement}. So let $f\in\R[\x]=\R[x_1,\dots,x_n]$ be a
form of degree $\deg(f)=d\ge1$ with strictly positive coefficients,
say $f=\sum_{|\alpha|=d}c_\alpha x^\alpha$. Let $S\subset\R[\x]$ be
the semiring consisting of all polynomials with nonnegative
coefficients. We shall work with the ring
$$A\>=\>\Bigl\{\frac p{f^r}\colon r\ge0,\ p\in\R[\x]_{dr}\Bigr\}$$
of homogeneous fractions of degree zero, considered as a subring of
the field $\R(\x)$ of rational functions. Let $V\subset\P^{n-1}$ be
the complement of the projective hypersurface $f=0$. Then $V$ is an
affine algebraic variety over $\R$, with affine coordinate ring
$\R[V]=A$. As a ring, $A$ is generated by $\R$ and by
the fractions $y_\alpha=\frac{x^\alpha}g$ where $|\alpha|=d$. Let $T$
be the subsemiring of $A$ generated by $\R_+$ and by the $y_\alpha$
($|\alpha|=d$). So the elements of $T$ are precisely the fractions
$\frac p{f^r}$, where $r\ge0$ and $p\in S$ is homogeneous of
degree~$dr$.

The semiring $T$ is archimedean, as follows from the identity
$\sum_{|\alpha|=d}c_\alpha y_\alpha=1$ and from $c_\alpha>0$ for all
$\alpha$ (\cite{BerrWormann} Lemma~1). First we prove Lemma
\ref{weakerstatement} under an extra condition.

\begin{lemma} \label{lem: assumeDegreeCondition}
Let $f,\,g\in\R[x_1,\dots,x_n]$ be forms where $f$ is nonconstant
with strictly positive coefficients and $g$ is strictly positive on
$\R^n_+\setminus\{0\}$. If $\deg(f)$ divides $\deg(g)$, there exists
$m\ge1$ such that $f^mg$ has nonnegative coefficients.
\end{lemma}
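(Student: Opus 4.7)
Set $s := e/d$ so that $\xi := g/f^s$ is a well-defined degree-zero element of $A$. From the description of $T$ as the set of fractions $p/f^r$ with $p \in S$ of degree $rd$, the statement $\xi \in T$ is equivalent to $gf^m$ having nonnegative coefficients for some $m \ge 0$. Thus the lemma reduces to proving $\xi \in T$.

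My plan is to apply the archimedean local-global principle (Theorem~\ref{archlgpsemiring}) to $\xi$. For each maximal ideal $\m$ of $A$, corresponding to a closed point $[v]$ of $V = \P^{n-1} \setminus \{f = 0\}$, I must construct a certificate $t_\m \in A \setminus \m$ with $t_\m \ge 0$ on $X_A(T)$ and $t_\m \xi \in T$. Since $T$ is archimedean, the real characters of $(A, T)$ correspond to the projective image of $\R_+^n \setminus \{0\}$ in $V(\R)$, a compact semialgebraic set on which $\xi$ is continuous and bounded below by some $c > 0$.

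I would proceed by induction on $n$, with $n = 1$ trivial. For real $\m$ corresponding to $[v]$ in the positive orthant image with all coordinates $v_i \ne 0$, strict positivity $\xi([v]) \ge c > 0$ combined with an application of Theorem~\ref{archpss} in the localization of $A$ at a suitable element of $T$ gives $t_\m$. For $\m$ with some coordinate $v_i = 0$, I restrict $f$ and $g$ to the hyperplane $\{x_i = 0\}$; the restrictions $f|_{x_i=0}$ and $g|_{x_i=0}$ satisfy the lemma's hypotheses in $n - 1$ variables, and the inductive certificate lifts to $A$ using the element $f|_{x_i=0}/f \in T$, whose value at $[v]$ is $1$ and which therefore does not lie in $\m$. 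For the remaining $\m$ (real or complex with all $v_i \ne 0$ but outside the positive orthant image), some generator $y_\alpha$ of $T$ is nonzero at $[v]$; a high power of $y_\alpha$ combined with a further inductive reduction to a subvariety cut out by the vanishing of other $y_\beta$'s provides $t_\m$.

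The main obstacle is the lift in the inductive step: given the polynomial $(f|_{x_i=0})^k g|_{x_i=0}$ with nonnegative coefficients coming from the inductive hypothesis, one must construct $t_\m \in A$ such that $t_\m \xi$ lies in $T$ globally, not merely on the hyperplane $\{x_i = 0\}$. The candidate $t_\m = (f|_{x_i=0}/f)^N$ alone does not obviously suffice, because the monomials of $g$ involving $x_i$ can carry negative coefficients that are not cancelled simply by high powers of the restriction; to absorb these cross terms into a genuine $T$-combination, one must bring in additional contributions from $f$'s full Newton structure, which is the principal technical challenge.
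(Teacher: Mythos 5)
Your plan is far more elaborate than this lemma requires, and you have in fact missed the point of the degree-divisibility hypothesis. With $e=rd$ the fraction $\xi=g/f^r$ lies in $A$, and it is \emph{strictly positive on $X_A(T)$}: since $T$ is an archimedean $\R$-semiring, the relevant orderings are controlled by the compact image of $\R_+^n\setminus\{0\}$ in $\P^{n-1}(\R)$, where both $f$ and $g$ are strictly positive. So Theorem~\ref{archpss} applies directly to $\xi$ and gives $\xi\in T$; clearing denominators finishes the proof. No local-global principle, no induction on $n$, no stratification by vanishing coordinates, and no case analysis over maximal ideals is needed. The role of the hypothesis $\deg f\mid\deg g$ is precisely to make $\xi$ a bona fide element of $A$ so that the plain archimedean Positivstellensatz suffices; the local-global machinery (Theorem~\ref{archlgpsemiring}) is what the paper reserves for the \emph{general} Lemma~\ref{weakerstatement}, where one must multiply by $x_1^k$ to equalize degrees and the resulting fraction can vanish on $X_A(T)$.

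As for the route you sketched, the gap you flag at the end is real and is the reason the paper does not argue this way even in the general case. Having $(f|_{x_i=0})^k\,g|_{x_i=0}$ with nonnegative coefficients says nothing about the $x_i$-divisible monomials of $(f|_{x_i=0})^N g$, so the candidate certificate $(f|_{x_i=0}/f)^N$ does not make $t_\m\xi\in T$. The paper's fix (in the proof of Lemma~\ref{weakerstatement}) is to take instead a \emph{generic positive linear form} $l$ not vanishing at the closed point and use $\psi=l^d/f$ as the certificate; then Lemma~\ref{lem: assumeDegreeCondition} applied to the pair $(l,g)$ — i.e.\ P\'olya's theorem — produces $N$ with $l^Ng\in S$, and multiplying through by $\psi^m$ with $md\ge N$ lands in $T$ globally. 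So the correct logical order is: prove this lemma directly via Theorem~\ref{archpss}, then use it (in the $\deg f=1$ case) to manufacture the local certificates needed for Theorem~\ref{archlgpsemiring} in the general lemma.
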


\begin{proof}
Suppose that $r$ is a positive integer such that $\deg(g)=r\deg(f)$.
Then the fraction $\frac g{f^r}$ lies in $A$ and is strictly positive
on $X_A(T)$, since $g$ is positive on $\R^n_+$. Hence the archimedean
Positivstellensatz (Theorem \ref{archpss}) gives $\frac g{f^r}\in T$,
and clearing denominators we get the desired conclusion.
\end{proof}

\begin{remark}
When $\deg(f)=1$, Lemma \ref{lem: assumeDegreeCondition} is in fact
P\'olya's Positivstellensatz \cite{Polya28}. In this case, our proof
above becomes essentially the same as the proof of \cite{Polya28}
given by Berr and W\"ormann in \cite{BerrWormann}.
\end{remark}

For the general case when $\deg(f)$ does not necessarily divide
$\deg(g)$, we need a more refined argument as follows. It is similar
to the approach in \cite{Scheiderer12}.

\begin{proof}
[Proof of Lemma \ref{weakerstatement}]
Fix integers $k\ge0$, $r\ge0$ such that $k+e=dr$, and consider the
fraction $\varphi:=\frac{x_1^kg}{f^r}\in A$. It suffices to show
$\varphi\in T$. Indeed, this means that there are $s\ge0$ and
$p\in S$, homogeneous of degree $ds$, such that $\varphi=\frac
p{f^s}$. We may assume $s\ge r$, then $f^{s-r}x_1^kg$ has
nonnegative coefficients. Clearly this implies that $f^{s-r}g$ has
nonnegative coefficients.

We prove $\varphi\in T$ by applying the local-global principle
\ref{archlgpsemiring}. So let $\m$ be a maximal ideal of $A$. Then
$\m$ corresponds to a closed point $z$ of the scheme $V$, and hence
of $\P^{n-1}$. There exist real numbers $t_1,\dots,t_n>0$ such that
the linear form $l=\sum_{i=1}^nt_ix_i$ does not vanish in $z$.
Hence the element $\psi:=\frac{l^d}f$ of $A$ does not lie in $\m$. On
the other hand, $\psi>0$ on $X_A(T)$, since $l$ and $f$ are strictly
positive on $\R^n_+\setminus\{0\}$.

By Lemma \ref{lem: assumeDegreeCondition}, applied to $l$ and $g$,
there exists an integer $N\ge1$ for which $l^Ng\in S$.
Choose an integer $m\ge1$ so large that $md\ge N$. Then
$$\psi^m\varphi\>=\>\frac{l^{md}x_1^kg}{f^{m+r}}$$
lies in $T$. From Theorem \ref{archlgpsemiring} we therefore deduce
$\varphi\in T$, as desired.
\end{proof}

We conclude with an example, as promised in the introduction.

\begin{example} \label{eq: DV}
For $n \ge 2$ and even $d = 2k \ge 4$, the form
$$p_{\lambda} = (x_1+x_2)^{2k} - \lambda x_1^kx_2^k +
\sum_{\stackrel{|w| = 2k}{w_i \neq 0 {\text{ for some } i \ge 3}}}
x^w\>\in\R[x_1,\dots,x_n]$$
of degree $d$
satisfies the equivalent conditions of Theorem
\ref{thm: strictPositivstellensatz} and has a negative coefficient
(of the monomial $x_1^kx_2^k$) whenever $\binom{2k}{k}<\lambda <
2^{2k - 1}$. Indeed, it suffices to check the case when $n = 2$,
in which case the vertification follows similarly as in a result of
D'Angelo-Varolin \cite[Theorem 3]{DV04}.
\end{example}


\section*{Appendix: Proof of Theorem
\ref{thm: strictPositivstellensatz} from Catlin-D'Angelo's Theorem}

In this appendix, we sketch how the results of Catlin-D'Angelo
\cite{CD99} can be used to deduce Theorem
\ref{thm: strictPositivstellensatz}. As in the first and second
proofs of Theorem \ref{thm: strictPositivstellensatz}, it suffices to
prove Lemma \ref{weakerstatement}.

Let $\z = (z_1, \ldots, z_n)$. Denote by $\C[\z, \conj{\z}]$ the
complex polynomial
algebra in the indeterminates $z_1, \dots, z_n, \conj{z_1}, \dots,
\conj{z_n}$. Equipped with conjugation, $\C[\z, \conj{\z}]$ has the
structure of a commutative complex $\ast$-algebra. A polynomial
$P \in \C[\z, \conj{\z}]$ is said to be {\emph{Hermitian}} if $P$
equals its conjugate $\conj{P}$. Equivalently, $P$ is Hermitian if
and only if $P(z, \conj{z})$ is real for all $z \in \C^{n}$.
A Hermitian polynomial $P\in \C[\z, \conj{\z}]$ is said to be
\emph{positive on $\C^{n}\setminus\{0\}$} if
$P(z, \conj{z}) > 0$ for all $z \in \C^{n}\setminus\{0\}$.
The {\emph{bidegree}} of a monomial $z^\alpha \conj{z}^{\beta} =
z_1^{\alpha_1} z_2^{\alpha_2} \cdots z_n^{\alpha_n}
\conj{z}_1^{\beta_1} \conj{z}_2^{\beta_2} \cdots
\conj{z}_n^{\beta_n} \in \C[\z,\conj{\z}]$
is $(|\alpha|,|\beta|) = (\alpha_1+ \cdots + \alpha_n,
\beta_1 + \cdots+ \beta_n)$. A {\emph{bihomogeneous polynomial}} is a
complex linear combination of monomials of the same bidegree. If a
bihomogeneous polynomial $P = \sum_{|\alpha| = d, |\beta| = e}
a_{\alpha\beta} z^\alpha \conj{z}^\beta$ is Hermitian, then $d = e$,
i.e. $P$ has bidegree $(d, d)$.

From \cite[Definition 2]{CD99}, a Hermitian bihomogeneous polynomial
$P$ is said to satisfy the \emph{strong global Cauchy-Schwarz} (in
short, \emph{SGCS}) \emph{inequality} if $|P(z, \conj{w})|^2 <
P(z, \conj{z}) P(w, \conj{w})$ whenever $z, w\in \C^{n}$ are linearly
independent.

The following result is a special case of
\cite[Corollary of Theorem 1]{CD99} (where the matrix $M$ of
bihomogeneous polynomials in \cite[Corollary of Theorem 1]{CD99} has
size $1 \times 1$).

\begin{theorem}[Catlin-D'Angelo {\cite[Corollary of Theorem 1]{CD99}}]
\label{thm: CDThm}
Let $R \in \C[\z,\conj{\z}]$ be a nonconstant Hermitian bihomogeneous
polynomial such that $R$ is positive on $\C^n\setminus\{0\}$, the
domain $\{z \in \C^{n} : R(z, \conj{z}) < 1\}$ is strongly
pseudoconvex, and $R$ satisfies the SGCS inequality. Then for each
Hermitian bihomogeneous polynomial $Q \in \C[\z,\conj{\z}]$ positive
on $\C^n \setminus\{0\}$, there exists $l \ge 1$ and polynomials
$h_1,\ldots, h_N \in \C[\z] \subset\C[\z,\conj{\z}]$ such that
$R^l Q = \sum_{k = 1}^N h_k \conj{h_k}$.
\end{theorem}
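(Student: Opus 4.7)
The plan is to combine D'Angelo's matrix-positivity reformulation of sums of Hermitian squares with Catlin's asymptotic expansion of the Bergman kernel on a strongly pseudoconvex domain. Writing $(d,d)$ and $(e,e)$ for the bidegrees of $R$ and $Q$, the desired decomposition $R^l Q = \sum_k h_k \overline{h_k}$ with $h_k \in \C[\z]$ is equivalent to positive semidefiniteness of the Hermitian coefficient matrix $C_l$ of the bihomogeneous polynomial $R^l Q$ in the monomial basis of bidegree $(dl+e, dl+e)$. The plan is therefore to show that $C_l$ is in fact strictly positive definite for every sufficiently large $l$.

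By hypothesis, the domain $\Omega := \{z \in \C^n : R(z, \overline{z}) < 1\}$ is bounded and strongly pseudoconvex. I would consider the weighted Bergman space $A^2(\Omega, Q^{-1}\,dV)$ of square-integrable holomorphic functions. Since $\Omega$ is invariant under the $S^1$-action $z \mapsto e^{i\theta}z$ and $Q$ is bihomogeneous, this Hilbert space decomposes orthogonally as $\bigoplus_{N \ge 0} V_N$, where $V_N$ is the space of holomorphic polynomials homogeneous of degree $N$. Let $K_N$ be the reproducing kernel of $V_N$; then $K_N(z, \overline{z}) = \sum_k |h_k(z)|^2$ for any orthonormal basis $\{h_k\}$ of $V_N$, tautologically a sum of Hermitian squares of polynomials in $\C[\z]$. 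The central analytic step is Catlin's asymptotic expansion \cite{Catlin99} for the weighted Bergman kernel: near the boundary $\{R = 1\}$,
\[
B^{Q^{-1}}_\Omega(z, \overline{z}) \;=\; \frac{a(z)\,Q(z, \overline{z})}{\bigl(1 - R(z, \overline{z})\bigr)^{n+1}} \;+\; O\!\bigl((1 - R)^{-n}\bigr),
\]
with $a > 0$ smooth on $\partial\Omega$ and determined by the Levi form. Extracting the $(dl+e, dl+e)$-bihomogeneous piece via the binomial series for $(1-R)^{-(n+1)}$ gives, uniformly on $\{R = 1\}$ and as $l \to \infty$,
\[
K_{dl+e}(z, \overline{z}) \;=\; a(z)\,Q(z, \overline{z})\,R(z, \overline{z})^l\, l^{n-1}\bigl(1 + o(1)\bigr).
\]

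To pass from this asymptotic functional identity to positive-definiteness of $C_l$, I would work in the fixed finite-dimensional space of bihomogeneous Hermitian polynomials of bidegree $(dl+e, dl+e)$. The matrix $C_l$ of $R^l Q$ and the matrix of $K_{dl+e}$ agree up to multiplication by the positive smooth factor $a(z) l^{n-1}$ and an $o(l^{n-1})$ relative error in sup norm on the compact sphere $\{R=1\}$; since $K_{dl+e}$ has positive-definite coefficient matrix by construction, a perturbation argument on the cone of positive-semidefinite matrices --- combined with approximation of the positive smooth factor $a$ on $\partial\Omega$ by a polynomial sum of Hermitian squares of small auxiliary bidegree --- forces $C_l$ itself to be positive-definite for every sufficiently large $l$, yielding the sought-for decomposition. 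The main obstacle throughout is Catlin's asymptotic expansion, whose proof requires the full $\bar\partial$-Neumann machinery --- subelliptic estimates on strongly pseudoconvex domains, compactness of the Neumann operator, and parametrix construction modeled on the Heisenberg group --- precisely the heavy analytic input that the paper's two algebraic proofs of Lemma \ref{weakerstatement} above are designed to bypass.
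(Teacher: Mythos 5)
Your proposal addresses a statement the paper itself never proves: Theorem \ref{thm: CDThm} is quoted from Catlin--D'Angelo \cite{CD99} and used purely as a black box in the appendix to sketch a third derivation of Lemma \ref{weakerstatement}; the stated purpose of the paper's two proofs is precisely to bypass the analytic machinery you invoke. So what you have written is an attempted reconstruction of the original analytic proof. As a summary of that strategy it is on target --- the reduction of $R^lQ=\sum_k h_k\conj{h_k}$ to positive semidefiniteness of the coefficient matrix $C_l$, the $S^1$-decomposition of a weighted Bergman space over $\Omega=\{R<1\}$, and Catlin's boundary asymptotics \cite{Catlin99} are indeed the skeleton of \cite{CD99} --- but it is not a proof, because the gaps sit exactly where the real work lies.

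Concretely: (1) the space of bihomogeneous polynomials of bidegree $(dl+e,dl+e)$ is not ``fixed''; its dimension grows with $l$, so sup-norm closeness of $K_{dl+e}/(a\,l^{n-1})$ to $R^lQ$ on the boundary does not control the coefficient matrices. The smallest eigenvalue of the matrix of $K_{dl+e}$ in the monomial basis can be far smaller than the $o(l^{n-1})$ error, and the comparison between sup norm on a sphere and matrix norm degrades with degree; this is why \cite{CD99} runs the argument at the level of operators on the Hilbert space (a positive multiplication operator plus a perturbation made small in operator norm via compactness of the $\bar\partial$-Neumann operator), not as a matrix perturbation in a monomial basis. (2) The factor $a(z)$ is a smooth positive function, not a bihomogeneous polynomial, and your remedy --- approximate $a$ by a Hermitian sum of squares and absorb it --- is circular: positivity of the coefficient matrix of $\bigl(\sum_j|p_j|^2\bigr)R^lQ$ does not imply positivity of the matrix of $R^lQ$; the failure of exactly this kind of implication is what makes the theorem nontrivial. (3) Extracting the $(dl+e,dl+e)$-component of the kernel from the interior expansion ``via the binomial series'' is only a heuristic; the error term $O((1-R)^{-n})$ is not given componentwise, and justifying the asymptotics of each Fourier component (this is in effect the Tian--Zelditch--Catlin expansion for powers of a line bundle) is itself the content of \cite{Catlin99}. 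Within this paper, the correct treatment of Theorem \ref{thm: CDThm} is simply to cite it, as the authors do.
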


\begin{proof}[Proof sketch of Lemma \ref{weakerstatement} from
Theorem \ref{thm: CDThm}]
Let $f = \sum_{|\alpha|=d} c_\alpha x^\alpha \in \R[\x]$ be a form of
degree $d$. Suppose that $f$ is nonconstant with strictly positive
coefficients. One verifies that $R := \sum_{|\alpha|=d} c_\alpha
z^\alpha \conj{z}^\alpha$ is a nonconstant Hermitian bihomogeneous
polynomial that is positive on $\C^n\setminus\{0\}$, the domain
$\{z \in \C^{n} : R(z, \conj{z}) < 1\}$ is strongly pseudoconvex, and
that $R$ satisfies the SGCS inequality.

Now suppose that $g = \sum_{|\beta|= e} b_\beta x^\beta \in \R[\x]$
is a form of degree $e$ which is strictly positive on
$\R^n_+\setminus\{0\}$. This implies that $Q := \sum_{|\beta|=e}
b_\beta z^\beta \conj{z}^\beta$ is a Hermitian bihomogenous
polynomial that is positive on $\C^n\setminus\{0\}$. Thus we may
apply Theorem \ref{thm: CDThm} to obtain $l \ge 1$ such that
$R^l Q = \sum_{k = 1}^N h_k \conj{h_k}$ for some polynomials
$h_1,\ldots, h_N \in \C[\z] \subset\C[\z,\conj{\z}]$. Hence
$R^l Q = \sum_{|\alpha| = |\beta| = ld + e} a_{\alpha\beta}
z^\alpha \conj{z}^\beta$ for some positive semidefinite Hermitian
matrix $A = (a_{\alpha\beta})_{|\alpha| = |\beta| = ld + e}$.
Writing $f^lg = \sum_{|\gamma| = ld + e} a_{\gamma}^\prime x^\gamma$,
we see that $A$ is in fact the diagonal matrix
$\mathrm{diag}(a_{\gamma}^\prime)_{|\gamma| = ld + e}$.
Since $A$ is positive semidefinite, all the coefficients
$a_{\gamma}^\prime$ of $f^l g$ are nonnegative. This completes the
proof of Lemma \ref{weakerstatement}.
\end{proof}


\end{document}